\newtheorem{theorem}{Theorem}[section]
\newtheorem{definition}[theorem]{Definition}
\newtheorem{example}[theorem]{Example}
\newtheorem{question}[theorem]{Question}
\title{This is the title}
\begin{document}
\hrule\hrule\hrule\hrule\hrule
\vspace{0.3cm}	
\begin{center}
{\bf{3-Heisenberg-Robertson-Schrodinger  Uncertainty Principle}}\\
\vspace{0.3cm}
\hrule\hrule\hrule\hrule\hrule
\vspace{0.3cm}
\textbf{K. Mahesh Krishna}\\
School of Mathematics and Natural Sciences\\
Chanakya University Global Campus\\
NH-648, Haraluru Village\\
Devanahalli Taluk, 	Bengaluru  Rural District\\
Karnataka  562 110 India\\
Email: kmaheshak@gmail.com\\

Date: \today
\end{center}

\hrule\hrule
\vspace{0.5cm}
\textbf{Abstract}:  Let  $\mathcal{X}$ be a 3-product space. Let  $A:	\mathcal{D}(A)\subseteq \mathcal{X}\to  \mathcal{X}$,  $B:	\mathcal{D}(B)\subseteq \mathcal{X}\to  \mathcal{X}$ and $C:	\mathcal{D}(C)\subseteq \mathcal{X}\to  \mathcal{X}$ be  possibly unbounded 3-self-adjoint operators. Then for all 
\begin{align*}
	x \in \mathcal{D}(ABC)\cap\mathcal{D}(ACB) \cap \mathcal{D}(BAC)\cap\mathcal{D}(BCA) \cap \mathcal{D}(CAB)\cap\mathcal{D}(CBA)
\end{align*}
with $\langle x, x, x \rangle =1$, we show that 
\begin{align*}
(1)\quad \quad 	\Delta _x(3, A)	\Delta _x(3, B)	\Delta _x(3, C)\geq |\langle (ABC-a BC-b AC-c AB)x, x, x\rangle +2abc|,
\end{align*}
where 
\begin{align*}
		\Delta _x(3, A)\coloneqq \|Ax-\langle Ax, x, x \rangle x \|, \quad a\coloneqq \langle Ax, x, x \rangle, \quad b \coloneqq \langle Bx, x, x \rangle, \quad c \coloneqq \langle Cx, x, x \rangle.
\end{align*}
We call Inequality (1) as 3-Heisenberg-Robertson-Schrodinger  uncertainty principle. Classical Heisenberg-Robertson-Schrodinger  uncertainty principle (by Schrodinger in 1930) considers two operators whereas Inequality (1) considers three operators. 

\textbf{Keywords}:   Uncertainty Principle,  Banach space.

\textbf{Mathematics Subject Classification (2020)}: 46C50, 46B99.\\

\hrule

\hrule
\section{Introduction}
Let $\mathcal{H}$ be a complex Hilbert space and $A$ be a possibly unbounded self-adjoint linear operator defined on domain $\mathcal{D}(A)\subseteq \mathcal{H}$. For $h \in \mathcal{D}(A)$ with $\|h\|=1$, define the \textbf{uncertainty} (also known as variance) of $A$ at the point $h$ as 
\begin{align*}
	\Delta _h(A)\coloneqq \|Ah-\langle Ah, h \rangle h \|=\sqrt{\|Ah\|^2-\langle Ah, h \rangle^2}. 
\end{align*}
In 1929, Robertson \cite{ROBERTSON} derived the following mathematical form of the uncertainty principle (term due to Condon \cite{CONDON}) of Heisenberg derived in 1927 \cite{HEISENBERG}. Recall that, for two linear operators $A:	\mathcal{D}(A)\subseteq \mathcal{H}\to  \mathcal{H}$ and $B:	\mathcal{D}(B)\subseteq \mathcal{H}\to  \mathcal{H}$, we define $[A,B] \coloneqq AB-BA$ and $\{A,B\}\coloneqq AB+BA$.
\begin{theorem} \cite{ROBERTSON, CASSIDY, HEISENBERG, VONNEUMANNBOOK, DEBNATHMIKUSINSKI, OZAWA} (\textbf{Heisenberg-Robertson Uncertainty Principle}) \label{HRT}
Let  $A:	\mathcal{D}(A)\subseteq \mathcal{H}\to  \mathcal{H}$ and $B:	\mathcal{D}(B)\subseteq \mathcal{H}\to  \mathcal{H}$  be self-adjoint operators. Then for all $h \in \mathcal{D}(AB)\cap  \mathcal{D}(BA)$ with $\|h\|=1$, we have 
\begin{align}\label{HR}
 \frac{1}{2} \left(\Delta _h(A)^2+	\Delta _h(B)^2\right)\geq \frac{1}{4} \left(\Delta _h(A)+	\Delta _h(B)\right)^2 \geq  \Delta _h(A)	\Delta _h(B)   \geq  \frac{1}{2}|\langle [A,B]h, h \rangle |.
\end{align}
\end{theorem}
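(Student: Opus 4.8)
The plan is to read the chain of four quantities from right to left, treating the rightmost inequality $\Delta_h(A)\,\Delta_h(B) \ge \tfrac12|\langle [A,B]h,h\rangle|$ (Robertson's bound) as the substantive claim and the two leftmost ones as elementary scalar facts. Throughout I would write $a \coloneqq \langle Ah,h\rangle$ and $b \coloneqq \langle Bh,h\rangle$, which are real because $A$ and $B$ are self-adjoint, and I would replace $A,B$ by the centered operators $A_0 \coloneqq A - aI$ and $B_0 \coloneqq B - bI$. These are again self-adjoint, satisfy $\Delta_h(A) = \|A_0 h\|$ and $\Delta_h(B) = \|B_0 h\|$ directly from the definition of $\Delta_h$, and crucially obey $[A_0,B_0] = [A,B]$ since the scalar shifts cancel in the commutator. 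Reducing to $A_0,B_0$ is what makes all three inequalities transparent.

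For the two leftmost inequalities I would set $\alpha \coloneqq \Delta_h(A) \ge 0$ and $\beta \coloneqq \Delta_h(B) \ge 0$. Both follow at once from $(\alpha-\beta)^2 \ge 0$: expanding gives $\alpha^2+\beta^2 \ge 2\alpha\beta$, which rearranges to $\tfrac12(\alpha^2+\beta^2) \ge \tfrac14(\alpha+\beta)^2$ for the first step and to $\tfrac14(\alpha+\beta)^2 \ge \alpha\beta$ for the second. No operator theory is needed for this part.

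The heart of the argument is the rightmost inequality. I would start from the Cauchy--Schwarz inequality $\|A_0 h\|\,\|B_0 h\| \ge |\langle A_0 h, B_0 h\rangle|$ and then extract the commutator from the right-hand side. Writing $z \coloneqq \langle A_0 h, B_0 h\rangle$ and using self-adjointness, one has $\langle B_0 A_0 h, h\rangle = z$ while $\langle A_0 B_0 h, h\rangle = \overline{z}$, so that $\langle [A_0,B_0]h,h\rangle = \overline{z} - z = -2i\,\operatorname{Im} z$. Hence $|\langle [A,B]h,h\rangle| = |\langle [A_0,B_0]h,h\rangle| = 2|\operatorname{Im} z| \le 2|z| \le 2\,\Delta_h(A)\,\Delta_h(B)$, which is exactly the claim after dividing by $2$. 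The hypothesis $h \in \mathcal{D}(AB)\cap\mathcal{D}(BA)$ guarantees that every operator product appearing above is defined at $h$, so no domain or convergence issue arises.

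The only delicate point I anticipate is bookkeeping rather than ideas: one must confirm that $a,b$ are genuinely real, so that $A_0,B_0$ remain self-adjoint and the Cauchy--Schwarz/imaginary-part manipulation is valid, and that the passage from $[A,B]$ to $[A_0,B_0]$ is legitimate on the stated domain. I expect no serious obstacle here, precisely because the displayed bound involves only the commutator and not the anticommutator term that appears in the full Schrödinger sharpening; that stronger refinement, in which one would also retain $\tfrac12|\langle \{A_0,B_0\}h,h\rangle|$, requires keeping both the real and imaginary parts of $z$ and is not needed for the inequalities stated in Theorem~\ref{HRT}.
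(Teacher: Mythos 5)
Your proof is correct: the two left inequalities do reduce to $(\alpha-\beta)^2\ge 0$, and your treatment of the right inequality — centering to $A_0=A-aI$, $B_0=B-bI$, noting $[A_0,B_0]=[A,B]$, and combining Cauchy--Schwarz with the fact that $\langle [A_0,B_0]h,h\rangle=-2i\operatorname{Im}\langle A_0h,B_0h\rangle$ is purely imaginary — is sound, including the domain bookkeeping. Note, however, that the paper itself gives no proof of Theorem \ref{HRT}: it is quoted as a classical result with citations, so there is no in-paper argument to compare against; what you have written is precisely the standard Robertson argument found in the cited literature.
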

In 1930,  Schrodinger improved Inequality (\ref{HR}) \cite{SCHRODINGER}. 
\begin{theorem} \cite{SCHRODINGER}
	(\textbf{Heisenberg-Robertson-Schrodinger  Uncertainty Principle}) \label{HRS}
	Let  $A:	\mathcal{D}(A)\subseteq \mathcal{H}\to  \mathcal{H}$ and $B:	\mathcal{D}(B)\subseteq \mathcal{H}\to  \mathcal{H}$  be self-adjoint operators. Then for all $h \in \mathcal{D}(AB)\cap  \mathcal{D}(BA)$ with $\|h\|=1$, we have 
	\begin{align*}
		\Delta _h(A)	\Delta _h(B)    \geq |\langle Ah, Bh \rangle-\langle Ah, h \rangle \langle Bh, h \rangle|
		=\frac{\sqrt{|\langle [A,B]h, h \rangle |^2+|\langle \{A,B\}h, h \rangle -2\langle Ah, h \rangle\langle Bh, h \rangle|^2}}{2}.
	\end{align*}	
\end{theorem}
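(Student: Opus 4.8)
The plan is to reduce the whole statement to a single application of the Cauchy--Schwarz inequality by recentering the two operators, and then to recover the commutator and anticommutator on the right-hand side by splitting one inner product into its real and imaginary parts.

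First I would set $a \coloneqq \langle Ah, h \rangle$ and $b \coloneqq \langle Bh, h \rangle$, both real since $A$ and $B$ are self-adjoint, and introduce the centered operators $\tilde A \coloneqq A - aI$ and $\tilde B \coloneqq B - bI$, which are again self-adjoint and satisfy $\Delta_h(A) = \|\tilde A h\|$ and $\Delta_h(B) = \|\tilde B h\|$. Using $\|h\|=1$ and the reality of $a,b$, a direct expansion gives $\langle \tilde A h, \tilde B h \rangle = \langle Ah, Bh \rangle - \langle Ah, h \rangle \langle Bh, h \rangle$. The Cauchy--Schwarz inequality then yields
\begin{align*}
\Delta_h(A)\,\Delta_h(B) = \|\tilde A h\|\,\|\tilde B h\| \geq |\langle \tilde A h, \tilde B h \rangle| = |\langle Ah, Bh \rangle - \langle Ah, h \rangle \langle Bh, h \rangle|,
\end{align*}
which is the asserted inequality.

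For the claimed identity I would move one centered operator across the inner product, writing $\langle \tilde A h, \tilde B h \rangle = \langle \tilde B \tilde A h, h \rangle$ via $\tilde B^* = \tilde B$, and then decompose $\tilde B \tilde A = \tfrac12 \{\tilde A, \tilde B\} - \tfrac12 [\tilde A, \tilde B]$. The anticommutator $\{\tilde A, \tilde B\}$ is self-adjoint, so $\langle \{\tilde A, \tilde B\} h, h \rangle$ is real, while the commutator $[\tilde A, \tilde B]$ is skew-adjoint, so $\langle [\tilde A, \tilde B] h, h \rangle$ is purely imaginary; these furnish, respectively, the real and imaginary parts of $2\langle \tilde A h, \tilde B h \rangle$, so their orthogonality gives $|\langle \tilde A h, \tilde B h \rangle| = \tfrac12 \sqrt{|\langle [\tilde A, \tilde B] h, h \rangle|^2 + |\langle \{\tilde A, \tilde B\} h, h \rangle|^2}$. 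Since constants drop out of the commutator, $[\tilde A, \tilde B] = [A, B]$, and a short computation gives $\langle \{\tilde A, \tilde B\} h, h \rangle = \langle \{A, B\} h, h \rangle - 2\langle Ah, h \rangle \langle Bh, h \rangle$. Substituting these two identities produces exactly the right-hand side of the asserted equality.

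The only point demanding care is domain bookkeeping: because $A$ and $B$ may be unbounded, I would check at the outset that $h \in \mathcal{D}(AB) \cap \mathcal{D}(BA)$ guarantees that $\tilde A h$, $\tilde B h$, $\tilde A \tilde B h$, and $\tilde B \tilde A h$ are all well defined, so that every inner product and every adjoint manipulation above is legitimate. Beyond this bookkeeping the argument is entirely elementary, the single substantive step being Cauchy--Schwarz; I do not expect a genuine obstacle, only the need to keep the conjugations straight in the expansion of $\langle \tilde A h, \tilde B h \rangle$.
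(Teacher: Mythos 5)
Your proof is correct, but there is nothing in the paper to compare it against: the paper states this theorem as Schrodinger's classical result with only a citation and gives no proof of it. The closest comparison is with the paper's proof of its own 3-operator analogue, and there your first half is essentially the same strategy: the paper also recenters the operators and bounds the product of uncertainties below by $|\langle Ax-ax, Bx-bx, Cx-cx\rangle|$, using the defining bound of the 3-product in place of Cauchy--Schwarz, and then expands multilinearly; your steps $\Delta_h(A)\Delta_h(B)=\|\tilde{A}h\|\|\tilde{B}h\|\geq|\langle \tilde{A}h,\tilde{B}h\rangle|$ and $\langle \tilde{A}h,\tilde{B}h\rangle=\langle Ah,Bh\rangle-\langle Ah,h\rangle\langle Bh,h\rangle$ are the two-operator instance of exactly this. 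Your second half, establishing the equality, has no counterpart anywhere in the paper (the paper's 3-product setting is real and its main theorem retains only an inequality), so it must stand on its own, and it does: the decomposition $\tilde{B}\tilde{A}=\tfrac{1}{2}\{\tilde{A},\tilde{B}\}-\tfrac{1}{2}[\tilde{A},\tilde{B}]$, the observation that the anticommutator term is real and the commutator term purely imaginary, and the identities $[\tilde{A},\tilde{B}]=[A,B]$ and $\langle\{\tilde{A},\tilde{B}\}h,h\rangle=\langle\{A,B\}h,h\rangle-2\langle Ah,h\rangle\langle Bh,h\rangle$ are all correct and combine to give precisely the stated right-hand side. Your domain remark is also sound: $h\in\mathcal{D}(AB)\cap\mathcal{D}(BA)$ gives $h\in\mathcal{D}(A)\cap\mathcal{D}(B)$, $Ah\in\mathcal{D}(B)$, and $Bh\in\mathcal{D}(A)$, which legitimizes the single adjoint move $\langle \tilde{A}h,\tilde{B}h\rangle=\langle \tilde{B}\tilde{A}h,h\rangle$ and every inner product you write.
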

Theorem \ref{HRS} leads to  the following question.
\begin{question}\label{Q}
	What is the  version of Theorem \ref{HRS} for three operators?
\end{question}

In this note, we answer Question \ref{Q} by deriving  an uncertainty principle for three operators acting on classes of Banach spaces, using trilinear forms. We note that there are uncertainty principles derived for three operators on Hilbert spaces, but ours differ from them \cite{SONGQIAO, KECHRIMPARISWEIGERT, QINFEILI, KECHRIMPARISWEIGERT2, DODONOV}.

\section{3-Heisenberg-Robertson-Schrodinger Uncertainty Principle}
The Heisenberg-Robertson-Schrodinger uncertainty principle requires the inner product to handle two operators; for three operators, we need a 3-product defined as follows. 
\begin{definition}
	Let $\mathcal{X}$ be a real Banach space with norm $\|\cdot\|$. A map $\langle \cdot, \cdot, \cdot \rangle: \mathcal{X}\times \mathcal{X}\times \mathcal{X}\to \mathbb{R}$ is said to be a \textbf{3-product} if following conditions hold.
	\begin{enumerate}[\upshape(i)]
		\item $\langle x, y, z \rangle =\langle \sigma(x), \sigma (y), \sigma (z) \rangle$ for all $x, y, z \in \mathcal{X}$,  for all bijections $\sigma:\{x, y, z\} \to \{x, y, z\}$. 
		\item $\langle \alpha x, y, z \rangle=\alpha \langle x, y, z \rangle$ for all $x, y, z \in \mathcal{X}$, for all $\alpha \in \mathbb{R}$.
		\item $\langle  x+w, y, z \rangle=\langle x, y, z \rangle+ \langle w, y, z \rangle$ for all $x, y, z, w \in \mathcal{X}$.
		\item $|\langle  x, y, z \rangle|\leq \|x\|\|y\|z\|$ for all $x, y, z \in \mathcal{X}$.
	\end{enumerate}
In this case, we say that $\mathcal{X}$ is a \textbf{3-product space}. 
\end{definition}
Following is the standard example we keep in mind. 
\begin{example}\label{SE}
	Let $(\Omega, \mu)$ be a measure space and $\mathcal{L}^3(\Omega, \mu)$ be the standard  real Lebesgue space. Generalized Holder's inequality says that
	\begin{align*}
		\int_{\Omega}|f_1(x)f_2(x)f_3(x)| \, d \mu (x)&\leq \left(\int_{\Omega}|f_1(x)|^3 \, d \mu (x)\right)^\frac{1}{3}\left(\int_{\Omega}|f_2(x)|^3 \, d \mu (x)\right)^\frac{1}{3}\left(\int_{\Omega}|f_3(x)|^3 \, d \mu (x)\right)^\frac{1}{3}\\
		&<\infty, \quad \forall f_1, f_2, f_3 \in \mathcal{L}^3(\Omega, \mu).
	\end{align*}
	Therefore  $\mathcal{L}^3(\Omega, \mu)$ is a 3-product space equipped with 3-product
	\begin{align*}
		\langle f_1, f_1, f_3 \rangle \coloneqq \int_{\Omega}f_1(x)f_2(x)f_3(x) \, d \mu (x), \quad \forall f_1, f_2, f_3 \in \mathcal{L}^3(\Omega, \mu).
	\end{align*}
\end{example}
We next introduce the notion of self-adjointness for operators on 3-product spaces.
\begin{definition}
Let  $\mathcal{X}$ be a 3-product space. 	A possibly unbounded  linear operator $A:\mathcal{D}(\mathcal{X})\subseteq \mathcal{X}\to \mathcal{X}$ is said to be \textbf{3-self-adjoint} if 
\begin{align*}
\langle  Ax, y, z \rangle=\langle  x, Ay, z \rangle	=\langle  x, y, Az \rangle, \quad \forall x, y, z \in \mathcal{D}(\mathcal{X}).
\end{align*}
\end{definition}
\begin{example}
	Consider $\mathbb{R}^n$ with the 3-product 
	\begin{align*}
		\left \langle (x_j)_{j=1}^n, (y_j)_{j=1}^n, (z_j)_{j=1}^n\right\rangle\coloneqq \sum_{j=1}^{n}x_jy_jz_j, \quad \forall (x_j)_{j=1}^n, (y_j)_{j=1}^n, (z_j)_{j=1}^n\in \mathbb{R}^n.
	\end{align*}
Let $a_1,\dots,  a_n$ be any real numbers. Define 
\begin{align*}
	A:\mathbb{R}^n \ni (x_j)_{j=1}^n \mapsto (a_jx_j)_{j=1}^n \in \mathbb{R}^n.
\end{align*}
Then $A$ is 3-self-adjoint. 
\end{example}
\begin{example}
Consider $\ell^3(\mathbb{N})$  (as a real sequence space) with the 3-product 
	\begin{align*}
	\left \langle \{x_n\}_{n=1}^\infty, \{y_n\}_{n=1}^\infty, \{z_n\}_{n=1}^\infty\right\rangle\coloneqq \sum_{n=1}^{\infty}x_ny_nz_n, \quad \forall \{x_n\}_{n=1}^\infty, \{y_n\}_{n=1}^\infty, \{z_n\}_{n=1}^\infty\in \ell^3(\mathbb{N}).
\end{align*}
Let $\{x_n\}_{n=1}^\infty$ be a bounded real sequence. Define 
\begin{align*}
	A: \ell^3(\mathbb{N}) \ni \{x_n\}_{n=1}^\infty
	 \mapsto \{a_nx_n\}_{n=1}^\infty \in \ell^3(\mathbb{N}).
\end{align*}
Then $A$ is 3-self-adjoint. 
\end{example}
\begin{example}
We continue from Example \ref{SE}. Let $\phi  \in \mathcal{L}^\infty (\Omega, \mu)$. Define 
\begin{align*}
	A:\mathcal{L}^3(\Omega, \mu)\ni f \mapsto Af\in \mathcal{L}^3(\Omega, \mu);\quad Af:\Omega\ni \alpha \mapsto (Af)(\alpha)\coloneqq \phi (\alpha)f(\alpha) \in \mathbb{R}.
\end{align*}
Then $A$ is 3-self-adjoint. 
\end{example}
Let $A:\mathcal{D}(\mathcal{X})\subseteq \mathcal{X}\to \mathcal{X}$ be a 3-self-adjoint operator. For $x \in \mathcal{D}(A)$ with $\langle x, x,  x \rangle=1$, define the \textbf{3-uncertainty} of $A$ at the point $x$ as 
\begin{align*}
	\Delta _x(3, A)\coloneqq \|Ax-\langle Ax, x, x \rangle x \|. 
\end{align*}
\begin{theorem}
(\textbf{3-Heisenberg-Robertson-Schrodinger  Uncertainty Principle})
Let  $\mathcal{X}$ be a 3-product space. Let  $A:	\mathcal{D}(A)\subseteq \mathcal{X}\to  \mathcal{X}$,  $B:	\mathcal{D}(B)\subseteq \mathcal{X}\to  \mathcal{X}$ and $C:	\mathcal{D}(C)\subseteq \mathcal{X}\to  \mathcal{X}$ be  possibly unbounded 3-self-adjoint operators. Then for all 
\begin{align*}
	x \in \mathcal{D}(ABC)\cap\mathcal{D}(ACB) \cap \mathcal{D}(BAC)\cap\mathcal{D}(BCA) \cap \mathcal{D}(CAB)\cap\mathcal{D}(CBA)
\end{align*}
with $\langle x, x, x \rangle =1$, we have 	
\begin{align*}
	&\frac{1}{27}\left(\Delta _x(3, A)	+\Delta _x(3, B)+\Delta _x(3, C)\right)^3\geq \Delta _x(3, A)	\Delta _x(3, B)	\Delta _x(3, C)\geq \\
	&|\langle (ABC-\langle Ax, x, x \rangle BC-\langle Bx, x, x \rangle AC-\langle Cx, x, x \rangle AB)x, x, x\rangle +2\langle Ax, x, x \rangle\langle Bx, x, x \rangle\langle Cx, x, x \rangle|.
\end{align*}
\end{theorem}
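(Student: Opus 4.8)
The plan is to reduce the second (main) inequality to a single application of the Hölder-type bound, property (iv), applied to three suitably centered vectors, after establishing an algebraic identity by expanding a trilinear form; the leftmost inequality is separate and immediate from AM-GM.

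First I would set $u \coloneqq Ax - ax$, $v \coloneqq Bx - bx$, $w \coloneqq Cx - cx$, where $a \coloneqq \langle Ax, x, x \rangle$, $b \coloneqq \langle Bx, x, x \rangle$, $c \coloneqq \langle Cx, x, x \rangle$. By the definition of 3-uncertainty we have $\|u\| = \Delta_x(3, A)$, $\|v\| = \Delta_x(3, B)$, $\|w\| = \Delta_x(3, C)$, so property (iv) immediately yields
\[
|\langle u, v, w \rangle| \leq \|u\|\,\|v\|\,\|w\| = \Delta_x(3, A)\,\Delta_x(3, B)\,\Delta_x(3, C).
\]
Hence it suffices to prove the identity
\[
\langle u, v, w \rangle = \langle (ABC - aBC - bAC - cAB)x, x, x \rangle + 2abc .
\]

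The core computation is to expand $\langle u, v, w \rangle$ by trilinearity (properties (ii) and (iii)) into $2^3 = 8$ terms, which I would group as follows. The constant term $-abc\,\langle x, x, x \rangle$ equals $-abc$ since $\langle x, x, x \rangle = 1$. The three mixed-scalar terms $bc\,\langle Ax, x, x \rangle$, $ac\,\langle x, Bx, x \rangle$, $ab\,\langle x, x, Cx \rangle$ each equal $abc$ after using the permutation symmetry (property (i)) to identify $\langle x, Bx, x \rangle = b$ and $\langle x, x, Cx \rangle = c$; together with the constant term these contribute $3abc - abc = 2abc$. For the three terms linear in a single operator product I would invoke 3-self-adjointness to collapse each onto one argument: for instance $\langle Ax, Bx, x \rangle = \langle x, ABx, x \rangle = \langle ABx, x, x \rangle$, so that $-c\,\langle Ax, Bx, x \rangle = -\langle (cAB)x, x, x \rangle$, and likewise $-b\,\langle Ax, x, Cx \rangle = -\langle (bAC)x, x, x \rangle$ and $-a\,\langle x, Bx, Cx \rangle = -\langle (aBC)x, x, x \rangle$. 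The top term collapses by applying 3-self-adjointness of $B$ and then of $A$: $\langle Ax, Bx, Cx \rangle = \langle Ax, x, BCx \rangle = \langle x, x, ABCx \rangle = \langle ABCx, x, x \rangle$. Summing the three groups gives exactly the claimed identity.

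The step I would watch most carefully is the bookkeeping in these 3-self-adjointness manipulations, since shifting operators between the three slots must reproduce precisely the product orderings $ABC$, $AB$, $AC$, $BC$ acting on a single argument. Here the symmetry axiom (i) is what keeps the manipulations well defined: it forces identities such as $\langle ABx, x, x \rangle = \langle BAx, x, x \rangle$, so the order in which operators are moved between slots is immaterial and no ambiguity arises. Once the identity is in hand, combining it with the Hölder bound above establishes $\Delta_x(3,A)\,\Delta_x(3,B)\,\Delta_x(3,C) \geq |\langle (ABC - aBC - bAC - cAB)x, x, x \rangle + 2abc|$. Finally, the leftmost inequality is nothing more than the AM-GM inequality $\tfrac{1}{3}(p+q+r) \geq (pqr)^{1/3}$, cubed, applied to the nonnegative reals $p = \Delta_x(3,A)$, $q = \Delta_x(3,B)$, $r = \Delta_x(3,C)$.
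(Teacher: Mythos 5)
Your proposal is correct and follows essentially the same route as the paper's proof: apply the H\"{o}lder-type bound (iv) to the centered vectors $Ax-ax$, $Bx-bx$, $Cx-cx$, expand the trilinear form and collapse terms via 3-self-adjointness and the symmetry axiom to obtain the stated identity, with the first inequality coming from AM-GM. Your write-up merely makes explicit the eight-term bookkeeping that the paper compresses into a single displayed computation.
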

\begin{proof}
First inequality follows from AM-GM inequality for three positive reals. Given 
\begin{align*}
	x \in \mathcal{D}(ABC)\cap\mathcal{D}(ACB) \cap \mathcal{D}(BAC)\cap\mathcal{D}(BCA) \cap \mathcal{D}(CAB)\cap\mathcal{D}(CBA)
\end{align*}
with $\langle x, x, x \rangle =1$, set 
	\begin{align*}
		a\coloneqq \langle Ax, x, x \rangle, \quad b \coloneqq \langle Bx, x, x \rangle, \quad c \coloneqq \langle Cx, x, x \rangle.
	\end{align*}
Then 
\begin{align*}
&\Delta _x(3, A)\Delta _x(3, B)	\Delta _x(3, C)\geq |\langle Ax-ax, Bx-bx, Cx-cx\rangle| \\
&=|\langle ABCx, x ,x\rangle -\langle (aBC+bAC+cAB)x, x, x \rangle +\langle (abC+bcA+caB)x, x, x \rangle-abc|\\
&=|\langle ABCx, x ,x\rangle -\langle (aBC+bAC+cAB)x, x, x \rangle +2abc|\\
&=|\langle (ABC-\langle Ax, x, x \rangle BC-\langle Bx, x, x \rangle AC-\langle Cx, x, x \rangle AB)x, x, x\rangle +2\langle Ax, x, x \rangle\langle Bx, x, x \rangle\langle Cx, x, x \rangle|.
\end{align*}
\end{proof}

 \bibliographystyle{plain}
 \bibliography{reference.bib}

\end{document}